\def\BibTeX{{\rm B\kern-.05em{\sc i\kern-.025em b}\kern-.08emT\kern-.1667em\lower.7ex\hbox{E}\kern-.125emX}}
\newtheorem{defi}{Definition}
\newtheorem{prop}{Proposition}
\newtheorem{cor}{Corollary}
\setlist[itemize,1]{label=$\bullet$}
\setlist[itemize,2]{label=$\bullet$}
\setlist[itemize,3]{label=$\bullet$}
\setlist[itemize,4]{label=$\bullet$}
\setlist[itemize,5]{label=$\bullet$}
\setlist[itemize,6]{label=$\bullet$}
\setlist[itemize,7]{label=$\bullet$}
\setlist[itemize,8]{label=$\bullet$}
\setlist[itemize,9]{label=$\bullet$}
\begin{document}

\fancyhead{}


\title{Liouvillian Solutions of Third Order Differential Equations}


\author{Camilo Sanabria}
\affiliation{%
  \institution{Universidad de los Andes}
  \streetaddress{Carrera 1 \# 18A – 12}
  \city{Bogota}
  \country{Colombia}}
\email{c.sanabria135@uniandes.edu.co}

\author{Thierry Combot}
\affiliation{%
  \institution{University of Burgundy}
  \streetaddress{6 avenue Alain Savary}
  \city{Dijon}
  \country{France}}
\email{thierry.combot@u-bourgogne.fr}

%

%
\begin{abstract} 
Consider a third order linear differential equation $L(f)=0$, where $L\in\mathbb{Q}(z)[\partial_z]$. We design an algorithm computing the Liouvillian solutions of $L(f)=0$. The reducible cases devolve to the classical case of second order operators, and in the irreducible cases, only finitely many differential Galois groups are possible. The differential Galois group is obtained through optimized computations of invariants and semi-invariants, and if solvable, the solutions are returned as pullbacks and gauge transformations of algebraic generalized hypergeometric function ${}_3F_2$. The computation time is practical for reasonable size operators.
\end{abstract}

%
%

\begin{CCSXML}
   <ccs2012>
      <concept>
          <concept_id>10002950.10003714.10003727.10003728</concept_id>
          <concept_desc>Mathematics of computing~Ordinary differential equations</concept_desc>
          <concept_significance>500</concept_significance>
          </concept>
    </ccs2012>
\end{CCSXML}
   
\ccsdesc[500]{Mathematics of computing~Ordinary differential equations}

\keywords{Differential Galois Theory, Liouvillian functions, Kovacic's algorithm}

%

%
\maketitle

\section{Introduction}

We consider a linear differential equation of third order
\begin{equation}\label{eq0}
a_3(z)\frac{d^3 }{d z^2} y(z)+a_2(z)\frac{d^2 }{d z^2} y(z)+a_1(z)\frac{d }{d z} y(z)+a_0(z)y(z)=0
\end{equation}
where $a_i\in\mathbb{Q}[z]$ are globally coprime and $a_3\ne 0$. The differential Galois group is the Galois group of the differential automorphisms of the extension $[K:\mathbb{C}(z)]$ where $K$ is the Picard Vessiot field, i.e. the field generated by a basis of solutions of equation \eqref{eq0}. Particularly interesting solutions of equation \eqref{eq0} are Liouvillian solutions, i.e. those which can be represented using iteratively integrations, exponentiations and algebraic extractions. The Galois groups stabilizes the vector space of Liouvillian solutions, and acts on them as a virtually solvable group. Our objective in this article is to present an effective algorithm to compute these Liouvillian solutions for \eqref{eq0}.

Equation \eqref{eq0} can also be represented as $Lf=0$, where $L\in\mathbb{Q}(z)[\partial_z]$ is the operator
\begin{equation}\label{op0}
  L=a_3\partial_z^3+a_2\partial_z^2+a_1\partial_z+a_0.
\end{equation}
The computation of the differential Galois group is delicate, as we do not have direct access to the Picard Vessiot field $K$ on which $\hbox{Gal}(L)$ acts. As done for second order operators \cite{KOVACIC1986}, \cite{SINGER1993}, \cite{ULMER1996}, we will use the fact \cite{VANDERPUT2003} that $\hbox{Gal}(L)$ is an algebraic subgroup of $GL_3(\mathbb{C})$.  Using a complete classification of them \cite{SINGER1993}, allows to split the analysis in several parts.

\begin{defi}
Let $G\subseteq GL_n(\mathbb{C})$ be a linear group acting on a $\mathbb{C}$-vector space $V$ of dimension $n$. We say that $G$ is \emph{irreducible} if there are no $G$-invariant subspaces of $V$ other than $V$ and ${0}$. We say that $G$ is \emph{imprimitive} if there exist non-trivial subspaces $V_1, V_2,\ldots, V_k$ of $V$, with $k>1$, such that $V=V_1\oplus\ldots\oplus V_k$ and, for each $\sigma\in G$, the mapping $V_i\mapsto \sigma{V_i}$ is a permutation of the set $\{V_1,\ldots,V_k\}$. We call $G$ \emph{primitive} if it's irreducible but is not imprimitive. 
\end{defi}

The Galois group is a priori a subgroup of $GL_3(\mathbb{C})$, however a reduction of the Wronskian is always possible. By substitution of the unknown
$$y(z) \leftarrow y(z) e^{-\int \frac{a_2(z)}{3 a_3(z)} dz},$$
the new operator $L'$ is such that $a_2(z)=0$. As this transformation uses Liouvillian functions, this maps the space of Liouvillian solutions of $L$ to Liouvillian solutions of $L'$. This also ensures that the Wronskian of $L'$ is constant. Thus, any differential automorphism $\sigma \in \hbox{Gal}(L')$ should preserve this Wronskian, and thus $\det \sigma =1$. So $\hbox{Gal}(L')\subset SL_3(\mathbb{C})$, and in the following we can always restrict ourselves to this case.

Now the analysis splits in three cases
\begin{itemize}
\item Reducible groups. In this case, the operator $L$ factorizes as $L=L_1L_2$ or $L=L_2L_1$, where $L_i$ is of order $i$. If $L=L_2L_1$, then a hyperexponential solution
$$y(z)=e^{\int g(z) dz},\quad g\in\mathbb{C}(z)$$
exists. Such solution can be algorithmically found \cite{VANHOEIJ1997}. Conversely, if $L=L_1L_2$, then the adjoint factorizes \cite{SINGER1993} $L^\dagger=L_2^\dagger L_1^\dagger$. Again, such factorization can be detected by looking for hyperexponential solutions. Then, the space of Liouvillian solutions for a reducible $L$ has dimension $0$ if $L=L_1L_2$ and $L_2$ has no Liouvillian solutions, dimension $1$ if $L=L_2L_1$ and $L_2$ has no Liouvillian solution, and dimension $3$ when both factors have Liouvillian solutions. Kovacic's algorithm \cite{KOVACIC1986} produces a basis for the space of Liouvillian solutions of second order operators and, thus, can be used, together with reduction of order, to obtain a basis for the space of Liouvillian solutions of reducible third order differential operators.
\item Imprimitive groups. The only possible decomposition is $\mathbb{C}^3=V_1\oplus V_2 \oplus V_3$ with $\dim V_i=1$. If requiring moreover that $L$ is irreducible, then only $4$ groups $G_{27}\subset G_{54},G_{81}\subset G_{162}$, all finite, are possible \cite{ULMER2003}. They are generated by a cyclic permutation matrix $T$ and by diagonal matrices of orders dividing $9$. As the operator is irreducible and the Galois group is finite, all solutions of equation \eqref{eq0} are algebraic, and thus Liouvillian.
\item Primitive groups. There are $10$ possible groups, with two, $SL_3$ and $SO_3$, that are not virtually solvable. By irreducibility, if the Galois group is $SL_3$ or $SO_3$, then there are no Liouvillian solutions. The remaining $8$ primitive groups are finite, which lead to a full basis of algebraic solutions (thus Liouvillian). These $8$ groups are
  \begin{itemize}
    \item The Klein group $G_{168}$ and its direct product with the cyclic group of order three $G_{168}\times C_{3}=\langle R,S,T,Z\rangle$.
    \item The $3$-fold covering of the Hessian group in $SL^3$, $H_{216}^{SL_3}$, and it's subgroups $H_{72}^{SL_3}$ and $F_{36}^{SL_3}$.
    \item The Valentiner group $A_6^{SL_3}$.
    \item The alternating group $A_5$ and its direct product with the cyclic group of order three $A_5\times C_3$.
 \end{itemize} 
\end{itemize}
We say that $L$ is \emph{Fuchsian} if any point $z_0$ (including infinity) is \emph{regular}, i.e. there exists a basis of solution of formal series in $(z-z_0)^\alpha \mathbb{C}[[z-z_0]][\ln(z-z_0]$ for a certain $\alpha\in\mathbb{C}$. Such condition can be easily verified, and leads to an effective criterion for existence of Liouvillian solutions.

\begin{cor}\label{cor1}
The solvable irreducible algebraic subgroups of $SL_3(\mathbb{C})$ are all finite, therefore an irreducible third order differential operator with Galois group in $SL_3(\mathbb{C})$ has Liouvillian solutions if and only if all its solutions are algebraic.
\end{cor}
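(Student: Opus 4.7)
The plan is to read the first assertion directly off the classification of irreducible algebraic subgroups of $SL_3(\mathbb{C})$ already recalled in the introduction, and then deduce the ``if and only if'' from the fact that, for an irreducible operator, the space of Liouvillian solutions is a Galois-invariant subspace.

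For the first sentence, I would enumerate the possibilities along the imprimitive/primitive dichotomy. In the imprimitive case only the four finite groups $G_{27}\subset G_{54}$, $G_{81}\subset G_{162}$ arise, by \cite{ULMER2003}. In the primitive case we are restricted to the ten groups listed above, of which exactly two, $SL_3(\mathbb{C})$ and $SO_3(\mathbb{C})$, are infinite; both are connected and perfect, so neither is solvable (nor even virtually solvable). Consequently, every solvable irreducible algebraic subgroup of $SL_3(\mathbb{C})$ lies in the finite list of twelve groups, which establishes the first sentence.

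For the equivalence, the ``if'' direction is trivial since algebraic functions are Liouvillian. Conversely, suppose $L$ is irreducible and has a nonzero Liouvillian solution $y$. The subspace of Liouvillian solutions in the Picard-Vessiot field is stable under $\hbox{Gal}(L)$, and irreducibility of the Galois representation on the three-dimensional solution space forces this subspace to be either $\{0\}$ or the whole solution space; since $y\ne 0$, the whole space is Liouvillian. A full Liouvillian basis forces the identity component $\hbox{Gal}(L)^0$ to be solvable, hence $\hbox{Gal}(L)$ is virtually solvable; combined with irreducibility and the first assertion this forces $\hbox{Gal}(L)$ to be finite, and a finite Galois group means every solution is algebraic over $\mathbb{C}(z)$. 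I do not anticipate a genuine obstacle: once the classification from \cite{SINGER1993, ULMER2003} is imported as a black box, the rest is bookkeeping, the only subtle point being that the existence of a single Liouvillian solution upgrades to a full Liouvillian basis because $L$ is irreducible.
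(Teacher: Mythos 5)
Your argument is correct and follows the same route as the paper, which states the corollary without a separate proof precisely because it is meant to be read off the classification recalled in the introduction (four finite imprimitive groups for irreducible $L$, ten primitive groups of which only $SL_3$ and $SO_3$ are infinite and neither is virtually solvable), together with the standard facts that the Liouvillian solutions of $L$ form a $\hbox{Gal}(L)$-stable subspace and that a finite Galois group forces algebraicity. The only cosmetic point is that your last step appeals to ``the first assertion'' (about \emph{solvable} groups) after having established only that $\hbox{Gal}(L)$ is \emph{virtually} solvable; this is harmless because your own enumeration already shows that the virtually solvable irreducible algebraic subgroups are the same twelve finite groups.
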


The corollary implies that an irreducible third order linear operator with Liouvillian solutions is Fuchsian, and even more, has a basis of Puiseux series solutions at every point.

\section{Semi-Invariants}

\subsection{List of Semi-Invariants}

After performing factorization tests on $L$, we can assume that $L$ is irreducible, and thus the existence of Liouvillian solutions devolves to testing whether the differential Galois group is one of the above $12$ finite groups. So from now on, we can assume that $L$ is irreducible and $G$ is solvable (else there are no Liouvillian solutions to find).

From here on, $y_1,y_2,y_3$ will denote a basis of solutions with Puiseux power series near a point $z_0$, $y_1,y_2,y_3\in\mathbb{Q}[[(z-z_0)^{1/r}]]$. In practice, we wish to use for $z_0$ a singularity of equation \eqref{eq0} (i.e. such that $a_3(z_0)=0$) for which the Puiseux series solutions require the largest $r\in\mathbb{N}$. A basis of solutions induces a representation $G\subseteq SL_3(\mathbb{C})$ of the differential Galois group of $L$.

\begin{defi}
Given three polynomials $P_1,P_2,P_3\in\mathbb{C}[X_1,X_2,X_3]$, we denote the Hessian, bordered Hessian and Jacobian by
\begin{align*}
H(P_1) & =  \hbox{det}\left( \left(\partial_{i,j} P_1\right)_{i,j=1\dots 3} \right)\\
HB(P_1,P_2) & =  \hbox{det}\left( \begin{array}{cc} \left(\partial_{i,j} P_2\right)_{i,j=1\dots 3} & (\partial_i P_1)_{i=1\dots 3}^\intercal\\ ((\partial_i P_1)_{i=1\dots 3}) & 0 \end{array} \right)\\
J(P_1,P_2,P_3) & =  \hbox{det}\left( \left(\partial_{i} P_j\right)_{i,j=1\dots 3} \right).\\
\end{align*}
\end{defi}

Given a finite group $G\subset SL_3(\mathbb{C})$, we consider the set of semi-invariants
$$\{P\in\mathbb{C}[X_1,X_2,X_3], \forall g \in G, \exists \alpha\in\mathbb{C}^*, P\circ g=\alpha P\}.$$
Given a semi-invariant $P\in\mathbb{C}[X_1,X_2,X_3]$, we define the \emph{character} of $P$ as the morphism
$$\chi: G \rightarrow \mathbb{C}^*\quad g \rightarrow P\circ g/P.$$
The \emph{degree of $P$} is the total degree of $P$, and \emph{the order of $P$} is $\sharp \hbox{Im}(\chi)$.

These semi invariants generate a finitely generated algebra $\mathcal{A}$, which is, respectively for our $12$ irreducible finite groups:
\begin{itemize}
\item For $G_{168}$ and $G_{168}\times C_3$, $\mathcal{A}=\mathbb{C}[F_4,F_6,F_{14},F_{21}]$ where
$$F_4=X_1^3X_2+X_2^3X_3+X_3^3X_1,$$
$$54F_6=H(F_4), 9F_{14}=HB(F_4,F_6),14F_{21}=J(F_4,F_6,F_{14}).$$
\item For $A_6$, $\mathcal{A}=\mathbb{C}[F_6,F_{12},F_{30},F_{45}]$ where 
$$F_6=9 X_1^5 X_3+10 X_1^3 X_2^3-45 X_1^2 X_2^2 X_3^2-135 X_1 X_2 X_3^4+9 X_2^5 X_3+27 X_3^6,$$
$$20250F_{12}=-H(F_6), 24300F_{30} = HB(F_6,F_{12}), 4860 F_{45} = J(F_6,F_{12},F_{30}).$$
\item For $A_5$ and $A_5\times C_3$, $\mathcal{A}=\mathbb{C}[F_2,F_6,F_{10},F_{15}]$ where $F_2=X_1^2+X_2X_3$,
$$F_6=8  X_1^4  X_2  X_3-2  X_1^2  X_2^2  X_3^2- X_1 ( X_2^5+ X_3^5)+ X_2^3  X_3^3,$$
$$F_{10}=HB(F_2,F_6)-32F_2^2F_6,10F_{15}=-J(F_2,F_6,F_{10}).$$
\item For $G_{27}$, $G_{54}$, $G_{81}$ and $G_{162}$, $\mathcal{A}=\mathbb{C}[P_3,S_3,Q_6,R_9]$ where
$$P_3=X_1X_2X_3, S_3=X_1^3+X_2^3+X_3^3,$$
$$Q_6=X_1^3X_2^3+X_1^3X_3^3+X_2^3X_3^3,R_9 = (X_1^3-X_2^3)(X_1^3-X_3^3)(X_2^3-X_3^3).$$
\item For $F_{36}^{SL_3}$, $\mathcal{A}=\mathbb{C}[F_3,\Phi_3,F_6,R_9]$ where $F_3=-3(1+\sqrt{3})P_3+S_3$,
$$\Phi_3=-3(1-\sqrt{3})P_3+S_3, 648 F_6=HB(F_3,\Phi_3)+HB(\Phi_3,F_3).$$
\item For $H_{72}^{SL_3}$, $\mathcal{A}=\mathbb{C}[\Phi_6,F_6,R_9,\Phi_{12}]$, where $\Phi_6=F_3\Phi_3$,
$$108000 \Phi_{12}=-H(F_6).$$
\item For $H_{216}^{SL_3}$, $\mathcal{A}=\mathbb{C}[F_6,R_9,\Phi_{12},F_{12}]$ where
$$F_{12}=HB(F_6,R_9)/(675\Phi_{12}).$$

\end{itemize}

\subsection{Symmetric powers}

\begin{defi}
Let $A\in M_3(\mathbb{C})$. We denote $\hbox{Sym}^m(A)\in M_N(\mathbb{C})$, $N=\binom{3+m-1}{m-1}$, the $m$-th symmetric power of $A$ obtained by the following procedure
\begin{enumerate}
\item Let $c_i:=X_1a_{1i}+X_2a_{2i}+X_3a_{3i}\in\mathbb{C}[X_1,X_2,X_3],\; i=1\dots 3$. 
\item For $i,j$ from $1$ to $N$ the $ij$-th entry of $\hbox{Sym}^m(A)$ is the coefficient on the monomial $\mathbf{X}^{\mathbf{m}_{i}}/\binom{m}{\mathbf{m}_i}$ of the polynomial $\mathbf{c}^{\mathbf{m}_j}/\binom{m}{\mathbf{m}_j}$, where $\mathbf{m}_k$ is the $k$th element of $\{[m_1,m_2,m_3]|\ m_1+m_2+m_3=m\}$, ordered in reversed lexicographic order on the exponents (i.e. $[m,0,0]<[m-1,1,0]<\ldots<[0,0,m]$).
\end{enumerate}
The $m$-th symmetric power of $L$ is the operator $\hbox{Sym}^m(L)$ whose space of solutions are the homogeneous polynomials of degree $m$ in $y_1,y_2,y_3$.
\end{defi}

The matrix system associated to equation \eqref{eq0} is $X'=AX$ where $A$ is the companion matrix
$$\left[ \begin{array}{rrr} 0 & 1 & 0\\ 0 & 0 & 1\\ -a_0/a_3 & -a_1/a_3 & -a_2/a_3 \end{array}\right].$$
As $y_1,y_2,y_3$ is a basis of solutions of \eqref{eq0}, then its Wronskian
$$Y=\left[\begin{array}{rrr} y_1 & y_2 & y_3\\ y_1' & y_2' & y_3'\\  y_1'' & y_2'' & y_3'' \end{array}\right]$$
is the fundamental matrix of the associated matrix system. Two matrix systems $X'=AX$ and $X'=\tilde{A}X$ are \emph{gauge equivalent} over a differential field $E\supseteq\mathbb{Q}(z)$ if there exists a matrix $M$ with coefficients in $E$ such that $MY$ is a fundamental matrix of the latter whenever $Y$ is a fundamental matrix of the former, or equivalently $\tilde{A}=M'M^{-1}+MAM^{-1}$.

Let $S^m(A)$ be the matrix such that $\hbox{Sym}^m(Y)'=S^m(A)\hbox{Sym}^m(Y)$ where $\hbox{Sym}^m(Y)$ is the $m$th symmetric power of $Y$. The system with matrix $S^m(A)$ is gauge equivalent over $\mathbb{Q}(z)$ to the system associated to $\hbox{Sym}^m(L)$. So there is a bijection between rational solutions of $\hbox{Sym}^m(L)$ to vectors of constants $C_o$ such that $\hbox{Sym}^m(Y)C_o$ has rational entries.

\begin{prop}
Let $m$ be a positive integer, then $X'=S^m(A)X$ admits $d$ linearly independent hyperexponential solutions if and only if, in the algebra of semi-invariants of $\hbox{Gal}(L)$, the homogeneous semi-invariant polynomials of degree $m$ span a vector space of dimension $d$.
\end{prop}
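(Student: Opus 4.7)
The plan is to reduce the statement to the standard differential-Galois correspondence between hyperexponential solutions of a linear matrix system and one-dimensional Galois-stable subspaces of the solution space, then translate such subspaces into semi-invariant polynomials via the symmetric power representation.

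First I would identify the solution space $\mathcal{S}_m$ of $X' = S^m(A)X$ with $\mathbb{C}[X_1,X_2,X_3]_m$ as a $G$-module. Since $\det \hbox{Sym}^m(Y)$ is a nonzero power of the Wronskian of $L$, the columns of $\hbox{Sym}^m(Y)$ are $\mathbb{C}$-linearly independent, so the map $v \mapsto \hbox{Sym}^m(Y)\cdot v$ is a $\mathbb{C}$-linear isomorphism $\mathbb{C}^N \to \mathcal{S}_m$. Equipping $\mathbb{C}^N \simeq \hbox{Sym}^m(\mathbb{C}^3)$ with the symmetric power of the Galois representation read in the basis $(y_1,y_2,y_3)$, this isomorphism is $G$-equivariant by naturality of $\hbox{Sym}^m$. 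Under the standard identification of $\hbox{Sym}^m(\mathbb{C}^3)$ with $\mathbb{C}[X_1,X_2,X_3]_m$ compatible with the evaluation $P \mapsto P(y_1,y_2,y_3)$, the $G$-action becomes $g \cdot P = P \circ g$; hence $G$-stable lines in $\mathcal{S}_m$ correspond exactly to the degree-$m$ semi-invariants in the sense of the paper.

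Next I would establish the standard equivalence: a nonzero $f = \hbox{Sym}^m(Y)\cdot v \in \mathcal{S}_m$ is a hyperexponential solution of $X' = S^m(A)X$ if and only if the line $\mathbb{C}\cdot v$ is $G$-stable. If $f$ is hyperexponential, then componentwise $f_i/f_j \in \mathbb{Q}(z)$ and $f_i'/f_i \in \mathbb{Q}(z)$, so by the Galois correspondence $\sigma(f)$ is a scalar multiple of $f$ for every $\sigma \in G$, giving a character $\chi : G \to \mathbb{C}^*$ with $\sigma(v) = \chi(\sigma)\,v$. Conversely, if $v$ spans a $G$-stable line with character $\chi$, then $\sigma(f) = \chi(\sigma)\,f$, so $f_i'/f_i$ and $f_i/f_j$ are $G$-invariant and therefore lie in $\mathbb{Q}(z)$, showing that $f$ is hyperexponential.

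Summing over characters, the $\mathbb{C}$-span of hyperexponential solutions inside $\mathcal{S}_m$ corresponds under the isomorphism above to the sum of all one-dimensional $G$-stable subspaces of $\hbox{Sym}^m(\mathbb{C}^3)$, i.e.\ to the $\mathbb{C}$-span of the degree-$m$ semi-invariants; taking dimensions yields the proposition. The main delicate point is the bookkeeping in the first step: one must fix an identification $\hbox{Sym}^m(\mathbb{C}^3) \simeq \mathbb{C}[X_1,X_2,X_3]_m$ that intertwines the natural symmetric-power $G$-action with the polynomial action $g \cdot P = P \circ g$ used by the paper, so that characters of stable lines coincide with characters of semi-invariants. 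Once this is set up correctly, the remainder follows from the standard Galois theory of linear differential equations.
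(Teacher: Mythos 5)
Your argument is correct and rests on the same key equivalence as the paper's own proof: a solution of the symmetric-power system is hyperexponential precisely when it spans a $\hbox{Gal}(L)$-stable line, which in turn corresponds to a degree-$m$ semi-invariant. The only cosmetic difference is that you work directly with the fundamental matrix $\hbox{Sym}^m(Y)$ of $X'=S^m(A)X$, whereas the paper first passes through the gauge equivalence with the scalar operator $\hbox{Sym}^m(L)$ and the observation that $P$ is a semi-invariant if and only if $P(y_1,y_2,y_3)$ has rational logarithmic derivative; both are instances of the standard Galois correspondence.
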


\begin{proof}
Since the system $X'=S^m(A)X$ and the system associated to $\hbox{Sym}^m(L)$ are gauge equivalent over $\mathbb{Q}(z)$, there exists an invertible matrix $M$ with rational coefficients which sends solutions of one to another. The vector space of hyperexponential solutions of the system $X'=S^m(A)X$ is therefore in bijection with the vector space of hyperexponential solutions of $\hbox{Sym}^m(L)$. Now, the semi-invariants of $Gal(L)$ correspond to the polynomials fixed up to multiplication by $Gal(L)$. Thus, $P(X_1,X_2,X_3)$ is a semi-invariant if and only if the logarithmic derivative of $P(y_1,y_2,y_3)$ is a rational function, equivalently, $P(y_1,y_2,y_3)$ is a hyperexponential solution of $\hbox{Sym}^m(L)$ which, via gauge equivalence, corresponds to a hyperexponential solutions of $X'=S^m(A)X$. Thus, the number of linearly independent hyperexponential solutions of $X'=S^m(A)X$ coincides with the dimension of the vector space of homogeneous semi-invariant polynomials of degree $m$
\end{proof}

Note that, as the solutions of $\hbox{Sym}^m(L)$ are exactly the homogeneous polynomials in $y_1,y_2,y_3$ of degree $m$, a function $\sqrt[r]{p(z)}q(z)$, $p(z)\in\mathbb{C}(z)$, $q(z)\in\mathbb{C}[z]$ is a hyperexponential solution of $\hbox{Sym}^m(L)$ if and only if there exists a semi-invariant homogeneous polynomial $P(X_1,X_2,X_3)$ of degree $m$ and order $r$ such that $P(y_1,y_2,y_3)=\sqrt[r]{p(z)}q(z)$. We have that
$$q(z) = P(p(z)^{-1/{rm}}y_1, p(z)^{-1/{rm}}y_2, p(z)^{-1/{rm}}y_3),$$
and thus $q(z)$ is a rational solution of $\hbox{Sym}^m(L_0)$, where $L_0 = \hbox{Sym}(L,rmp(z)Dz+p'(z))$. Now, $P(X_1,X_2,X_3)$ is invariant under $Gal(L_0)$, and thus, the computation of the semi-invariants of $L$ of degree $m$ and order $r$ reduces to the identification of the candidates of the \emph{hyperexponential part} $\sqrt[r]{p(z)}$ for which $L_0$ could admit an invariant of degree $m$.

Consider the set of roots of $a_3\in\mathbb{Q}[z]$, the generated splitting field $\mathbb{L}$ and its Galois group $G'$. All semi-invariants can be computed with coefficients in $\mathbb{L}$, as finding them devolves to compute hyperexponential solutions of some symmetric power $\hbox{Sym}^m(L)$ \cite{WEIL1995}.  Let us now consider the semi-invariants of given degree $m$ and order dividing $r$. These generate a vector space $\mathcal{V}_{m,r}$ of dimension $d$ of solutions of $\hbox{Sym}^m(L)$ that are linear combinations of hyperexponential solutions of the form $\sqrt[r]{p(z)}q(z)$.

\begin{prop}\label{prop_d}
Let $y_1,y_2,y_3$ be a basis of solutions of $L$. Let $P$ be a semi-invariant of $Gal(L)$ of degree $m$ and order $r$. Then, up to multiplication by a constant, we have $P(y_1,y_2,y_3)=\sqrt[r]{p(z)}q(z)$ where $p(z)$ has coefficients in an extension of $\mathbb{Q}$ of degree at most $\dim  \mathcal{V}_{m,r}$.
\end{prop}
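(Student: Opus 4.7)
My plan is to exploit the action of the Galois group $G'=\mathrm{Gal}(\mathbb{L}/\mathbb{Q})$ on the coefficients of $P$ and transfer it to hyperexponential solutions of $\mathrm{Sym}^m(L)$. Since the $y_i$ lie in $\mathbb{Q}[[(z-z_0)^{1/r}]]$, they are Galois-fixed, and the identity $P(y_1,y_2,y_3)^r=p(z)q(z)^r$ transports under any $\tau\in G'$ to $\tau(P)(y_1,y_2,y_3)^r=\tau(p)(z)\tau(q)(z)^r\in\mathbb{L}(z)$. Hence $\tau(P)$ is again a semi-invariant of $\mathrm{Gal}(L)$ of degree $m$ and order dividing $r$, with $\tau(P)(y_1,y_2,y_3)=\sqrt[r]{\tau(p)(z)}\,\tau(q)(z)\in\mathcal{V}_{m,r}$.

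Next I would normalize the hyperexponential part: multiplying $p$ by an $r$-th power of a rational function can be absorbed into $q$, and multiplying $P$ by a nonzero constant only rescales $\sqrt[r]{p}$. I therefore choose $p(z)$ to be the unique monic polynomial representative of its class in $\mathbb{L}(z)^*/\mathbb{L}(z)^{*r}$ with no $r$-th power factor. Let $H=\{\tau\in G':\tau(p)/p\in\mathbb{L}(z)^{*r}\}$, so $[\mathbb{L}^H:\mathbb{Q}]=[G':H]$. I claim this canonical $p$ is fixed pointwise by $H$. Indeed, writing $\tau(p)/p=u^r$ with $u=c\,f/g$ ($c\in\mathbb{L}^*$ and $f,g\in\mathbb{L}[z]$ coprime monic), a valuation at any irreducible $\pi\mid g$ yields $v_\pi(\tau(p))=v_\pi(p)-r\,v_\pi(g)<0$, contradicting $\tau(p)\in\mathbb{L}[z]$; symmetrically, $\pi\mid f$ would give $v_\pi(\tau(p))\ge r$, contradicting the absence of $r$-th power factors in $\tau(p)$. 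Thus $f=g=1$ and $u=c$ is constant, and matching leading coefficients of the monic polynomials $\tau(p)$ and $c^rp$ forces $c^r=1$, whence $\tau(p)=p$. So $p$ already has its coefficients in $\mathbb{L}^H$.

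The final step bounds $[G':H]$ by $\dim\mathcal{V}_{m,r}$ via linear independence of hyperexponentials. Pick coset representatives $\tau_1,\dots,\tau_k$ of $G'/H$, with $k=[G':H]$, and set $f_i=\tau_i(P)(y_1,y_2,y_3)=\sqrt[r]{\tau_i(p)}\,\tau_i(q)\in\mathcal{V}_{m,r}$. For $i\ne j$ we have $\tau_i\tau_j^{-1}\notin H$, so $\tau_i(p)/\tau_j(p)\notin\mathbb{L}(z)^{*r}$, and the ratio $f_i/f_j=\sqrt[r]{\tau_i(p)/\tau_j(p)}\cdot\tau_i(q)/\tau_j(q)$ is not a rational function of $z$; in particular the logarithmic derivatives of $f_1,\dots,f_k$ are pairwise distinct, and the classical fact that hyperexponential functions with distinct logarithmic derivatives are $\mathbb{C}$-linearly independent gives $k\le\dim\mathcal{V}_{m,r}$. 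Hence the coefficients of $p$ lie in $\mathbb{L}^H$, an extension of $\mathbb{Q}$ of degree at most $\dim\mathcal{V}_{m,r}$. The main obstacle is the normalization in paragraph two: one has to verify that the monic, $r$-th-power-free choice of $p$ is preserved not merely by its literal stabilizer but by the whole stabilizer $H$ of the class $[p]$, which is precisely what the valuation argument accomplishes.
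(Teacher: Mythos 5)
Your proof follows essentially the same strategy as the paper's: let a Galois group over $\mathbb{Q}$ act on coefficients, observe that the conjugates of $P$ are again semi-invariants of degree $m$ and order dividing $r$, and bound the orbit of $p$ by $\dim\mathcal{V}_{m,r}$. (The paper works with the absolute Galois group $G_0$ and the semidirect product $\mathrm{Gal}(L)\rtimes G_0$ from van der Put's lemma, checking via the character identity $\chi_\sigma(g^\sigma)=\sigma(\chi(g))$ that the conjugate is a semi-invariant of order $r$; your shortcut via the rationality of $\tau(P)(\mathbf{y})^r$ reaches the same conclusion.) Your second paragraph is a genuine refinement: the paper jumps from ``the orbit of $p$ has at most $d$ elements'' to the degree bound, and your monic, $r$-th-power-free normalization together with the valuation argument supplies exactly the missing identification of the literal stabilizer of $p$ with the stabilizer of its class. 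One step does need repair: the ``classical fact'' you invoke in the last paragraph is false as stated --- $f$, $zf$ and $(1+z)f$ are hyperexponential with pairwise distinct logarithmic derivatives, yet $f+zf-(1+z)f=0$. What you actually need, and have already established, is the stronger property that the ratios $f_i/f_j$ are not rational functions; hyperexponential functions with pairwise non-rational ratios are linearly independent over $\mathbb{C}(z)$, hence over $\mathbb{C}$, and this yields $k\le\dim\mathcal{V}_{m,r}$. (Note also that ``$\tau_i(p)/\tau_j(p)\notin\mathbb{L}(z)^{*r}$'' does not by itself exclude the ratio being an $r$-th power in $\mathbb{C}(z)$ up to a constant; it is again your normalization, via the same valuation-plus-leading-coefficient argument, that rules this out.) With that one-line substitution the proof is complete.
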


\begin{proof}
Let $G_0$ be the absolute Galois group $Gal(\bar{\mathbb{Q}}:\mathbb{Q})$. From lemma 2.1 in \cite{VANDERPUT1993}, the group of differential automorphisms of $K\supset\mathbb{Q}(z)$ is the semi-direct product $Gal(L)\rtimes G_0$. Given a semi-invariant $P$ of degree $m$, order $r$ and character $\chi$, $P(y_1,y_2,y_3)=\sqrt[r]{p(z)}q(z)$ is a solution of $\hbox{Sym}^m(L)$. Since every $\sigma\in G_0$ commutes with $\partial_z$, then $\sigma\left(P(y_1,y_2,y_3)\right)=\sqrt[r]{p_\sigma(z)}q(z)$ is a solution of $\hbox{Sym}^m(L)$, and therefore there exists a semi-invariant $P_{\sigma}$ of degree $m$ such that $P_{\sigma}(y_1,y_2,y_3)=\sqrt[r]{p_\sigma(z)}q(z)$. Let $\chi_\sigma$ be its character. Given $g\in Gal(L)$ let $g^{\sigma}\in Gal(L)$ be such that $\sigma g=g^\sigma \sigma$ in $Gal(L)\rtimes G_0$. We have
\begin{align*}
  \chi_{\sigma}(g^{\sigma})P_\sigma(y_1,y_2,y_3) & = g^{\sigma}\left(P_\sigma(y_1,y_2,y_3)\right) = g^{\sigma}\sigma\left(P(y_1,y_2,y_3)\right) \\
  & = \sigma g\left(P(y_1,y_2,y_3)\right)=\sigma\left(\chi(g)P(y_1,y_2,y_3)\right).
\end{align*}
Therefore $\chi_{\sigma}(g^{\sigma})=\sigma(\chi(g))$ and $P_\sigma$ is a semi-invariant of order $r$. Since to each image of $p(z)$ under $G_0$ corresponds a solution of $\hbox{Sym}^m(L)$ and a semi-invariant of degree $m$ and order $r$, then the orbit of $p(z)$ under $G_0$ has at most $d=\dim \mathcal{V}_{m,r}$ elements, and thus its coefficients lie in an extension of $\mathbb{Q}$ of degree at most $d$.
\end{proof}

\subsection{Computing semi-invariants}

To describe the algorithm to find the semi-invariants we introduce some notation.

Given a set $S\subseteq\mathbb{Q}$ and a positive integer $m$, we denote by $mS$ the set of $m$-fold sums of elements of $S$. Given a positive integer $r$, we denote by $s\sim_r t$ the relation in $S$ defined by $r(s-t)\in\mathbb{Z}$, and by $[S]_r$ a set of representative of $\sim_r$ in $S\cap\frac{1}{r}\mathbb{Z}$.

Given three integers $\mathbf{m}=[m_1,m_2,m_3]$, we denote by $Sym^{\mathbf{m}}(L)$ the differential operator with solutions $y_1^{m_1}y_2'^{m_2}y_3''^{m_3}$ whenever $y_1$, $y_2$, $y_3$ are solutions of $L$.

We denote $W$ the generic Wronskian matrix
$$W=\left[\begin{array}{rrr} X_1 & X_2 & X_3\\ X_1' & X_2' & X_3'\\  X_1'' & X_2'' & X_3'' \end{array}\right]$$
where $X_1,X_2,X_3$ are generic functions.

Our algorithm \underline{\sf semiInvariants} produces a set of generators of the semi-invariants. It uses van-Hoeij and Weil's algorithm from \cite{VANHOEIJWEIL1997} that computes invariants and present them in the following way. Given an equation \eqref{eq0}, a positive integer $m$ and a basis of solutions $\mathbf{y}=(y_1,y_2,y_3)$ in power series, the algorithm produces a collection of constant vectors $C_{o,1}$, $\ldots$, $C_{o,s}$ such that the vectors $\hbox{Sym}^m(Y)C_{o,1}$, $\ldots$, $\hbox{Sym}^m(Y)C_{o,s}$ form a basis of rational solutions, and $\hbox{Sym}^m(W)C_{o,1}$, $\ldots$, $\hbox{Sym}^m(W)C_{o,s}$ form a basis of invariant, of the system $X'=S^m(A)X$.

\noindent\underline{\sf semiInvariants}\\
\textsf{Input:} An equation \eqref{eq0}, two positive integers $m,r$, and $d\in\{1,2\}$.\\
\textsf{Output:} A collection of couples $\left(C_{o,1},\sqrt[r_1]{p_1(z)}\right)$, $\ldots$, $\left(C_{o,s},\sqrt[r_s]{p_s(z)}\right)$ of constant vectors and radical of polynomials such that, for some basis of solutions $y_1,y_2,y_3$, the vectors $\hbox{Sym}^m(Y)C_{o,1}$, $\ldots$, $\hbox{Sym}^m(Y)C_{o,s}$ generate the hyperexponential solutions,  $\sqrt[r_1]{p_1(z)}$, $\ldots$, $\sqrt[r_s]{p_s(z)}$ are their respective hyperexponential parts, and $\hbox{Sym}^m(W)C_{o,1}$, $\ldots$, $\hbox{Sym}^m(W)C_{o,s}$ generate the semi-invariants of the system $X'=S^m(A)X$ of order dividing $r$, and coefficients in an extension of $\mathbb{Q}$ of degree $\le d$.\\
\begin{enumerate}[leftmargin=*]
\item Let $y_1,y_2,y_3$ be a basis of solutions of \eqref{eq0} in power series.
\item For $d=1$, note $q_i,\;1\le i\le r_1$ the irreducible factors of $a_3\in\mathbb{Q}[z]$. For $d=2$, note $q_i$ the irreducible factors of $a_3\in\bar{\mathbb{Q}}[z]$.
\item For each $1\le i\le r_1$, consider a root of $q_i$, and compute the set of local exponents of this root, giving sets $E_{q_1},\ldots,E_{q_{r_1}}$.
\item Let $S$ be the set of polynomials
$$p(z)=\prod_i^{r_1} q_i(z)^{re_i},$$
$e_i\in [E_{q_i}]_r$, such that $p(z)$ has coefficients in a field extension of $\mathbb{Q}$ of degree $\le d$.
\item $\hbox{Out}:=[]$.
\item For each $p(z)\in S$, compute the operator $L_0=\hbox{Sym}(L,rmp(z)Dz+p'(z))$ and set
$$\hbox{Out}:=\hbox{Out}\cup[(C_{o,1},p(z)),\ldots, (C_{o,t},p(z))]$$
where $[C_{o,1},\ldots,C_{o,t}]$ are constant vectors obtained from van Hoeij and Weil's algorithm \cite{VANHOEIJWEIL1997} for $L_0$, $m$ ,and $\mathbf{y}/\sqrt[r]{p(z)}$.
\item \textsf{Return} $\hbox{Out}$.
\end{enumerate}

Note that, an inspection of the possible algebras $\mathcal{A}$ generated by the semi-invariants reveals that we will only use $d=2$ in \underline{\sf semiInvariants} with $m=3$ when probing for semi-invariants of the imprimitive groups (with $r=2$) and of $F_{36}^{SL_3}$ ($r=4$). For the rest of semi-invariants it will suffice to use $d=1$.

Given an equation \eqref{eq0} and a triple of non-negative integers $\mathbf{m}=[m_1,m_2,m_3]$ one can obtain a rational function $Q(z)$ and a non-negative integer $N$ such that every rational solution of $\hbox{Sym}^{\mathbf{m}}(L)$ is of the form $Q(z)q(z)$ where $q(z)$ is a polynomial of degree not bigger than $N$. How to compute the bounds $Q(z)$ and $N$ algorithmically is explained in \cite{VANHOEIJWEIL1997}. Given $Q(z)$ and $N$, getting the explicit expression of the vectors of rational functions $\hbox{Sym}^m(Y)C_{o}$ from the power series of $y_1,y_2,y_3$ at $z_0$, devolves to finding $q(z)\in\mathbb{Q}[x]_{\le N}$ such that 
$$Q_{\mathbf{m}}(z)q(z)\equiv \hbox{Sym}^m(Y)C_{o}\pmod{(z-z_0)^{N}}.$$
In particular, there is a bound on the number of terms to consider in the power series expansion of the solutions $y_1,y_2,y_3$.

\noindent\underline{\sf valuesOfSemiInvariant}\\
   \textsf{Input:} An equation \eqref{eq0}, two positive integers $m,r$, a basis of solutions $\mathbf{y}=(y_1,y_2,y_3)$ in power series, a semi-invariant $P(X_1,X_2,X_3)$, and the hyperexponential part $\sqrt[r]{p(z)}$ of $P(\mathbf{y})$.\\
   \textsf{Output:} The entries of $F = \hbox{Sym}^m(Y)C_o$, where  $C_o$ is the vector of coefficients of $P$ in the monomials $\mathbf{X}^{\mathbf{m}_{i}}/\binom{m}{\mathbf{m}_i}$, in the form hyperexponential part times a polynomial.\\
   \begin{enumerate}[leftmargin=*]
      \item Obtain the vector $C_o$ coefficients of $P$ in the monomials $\mathbf{X}^{\mathbf{m}_{i}}/\binom{m}{\mathbf{m}_i}$
      \item Let $Y_0$ be the Wronskian of $\sqrt[rm]{p(z)}[y_1,y_2,y_3]$ and let $N=\binom{3+m-1}{3-1}$.
      \item Compute the operator $L_0 := \hbox{Sym}(L,rmP(z)Dz+p'(z))$.
      \item For every $\mathbf{m}$ in the collection of triples $[m_1,m_2,m_3]$ such that $m_1+m_2+m_3=m$, obtain the bounds $\left[Q_{\mathbf{m}}(z), N_{\mathbf{m}}\right]$  for $\hbox{Sym}^{\mathbf{m}}(L_0)$.
      \item For $i$ from $1$ to $N$ find $q_i(z)\in\mathbb{Q}[x]_{\le N_{\mathbf{m}_i}}$ such that 
      $$Q_{\mathbf{m}_i}q_i(z)\equiv \hbox{Sym}^m(Y_0)_iC_{o,i}\pmod{(z-\alpha)^{N_{\mathbf{m}_i}}}$$
      with $\alpha\in\mathbb{C}$, and set $F_i :=Q_{\mathbf{m}_i}q_i(z)$.
      \item Let $h(z)=\sqrt[mr]{p(z)}$ and
      $$C_h:=\left[   \begin{array}{lll}      h(z) & 0 & 0\\    h'(z) & h(z) & 0\\     h''(z) & 2h'(z) & h(z)  \end{array}\right].$$
      \item \textsf{Return} $F:=\hbox{Sym}^m(C_h)[F_i]$.
   \end{enumerate}

An inspection of the algorithm \underline{\sf valuesOfSemiInvariant} shows that it suffices to take the expansion up to order $\max_i N_{\mathbf{m}_i}+3-1$ when considering the power series expansion of the solutions at a point $z_0\ne \infty$. This bound can be computed directly from \eqref{eq0} and the integer $m$, before running the algorithms \underline{\sf semiInvariant} and \underline{\sf valuesOfSemiInvariant}.

\subsection{Riccati Polynomials and Pullbacks}

A polynomial $R(T)\in\mathbb{C}(z)[T]$ is a Riccati polynomial of \eqref{eq0} if $e^{\int\omega(z)dz}$ is a solution of \eqref{eq0} whenever $\omega(z)$ is a root of $R(T)$, or equivalently, the logarithmic derivative of a solution is a root of $R(T)$.

Given a homogeneous semi-invariant polynomial $P(X_1,X_2,X_3)\in\mathbb{C}[X_1,X_2,X_3]$ that factors into linear terms, Theorem 2.1 in \cite{VANHOEIJ1999} implies that the following algorithm produces a Riccati polynomial.

\noindent\underline{\sf produceRiccatiPolynomial}\\
\textsf{Input:} An equation \eqref{eq0}, a basis of solutions $y_1,y_2,y_3$ in power series, and a homogeneous semi-invariant polynomial $P(X_1,X_2,X_3)$ that factors into linear terms.\\
\textsf{Output:}  A Riccati polynomial for $L$.\\
\begin{enumerate}[leftmargin=*]
   \item Let $C_o$ be the vector of coefficients of $P(X_1,X_2,X_3)$ in the monomials $\mathbf{X}^{\mathbf{m}_{i}}/\binom{m}{\mathbf{m}_i}$ and $m=\deg(P)$.
   \item Obtain $F = \hbox{Sym}^m(Y)C_o$ with \underline{\sf valuesOfSemiInvariant} and denote $a$ the first entry of $F$.
   \item Let
   $$C_T=\left[ \begin{array}{rrr} T & -1 & 0\\ 0 & 1 & 0\\ 0 & 0 & 1\end{array}\right].$$
   \item \textsf{Return} the first entry of $\frac{1}{a}\hbox{Sym}^m(C_T)F$.
\end{enumerate}

Indeed, the entries of the first row of $\hbox{Sym}^m(C_T)\hbox{Sym}^m(Y)=\hbox{Sym}^m(C_TY)$ are $Ty_i-y'_i$ and so the first entry of $\frac{1}{a}\hbox{Sym}^m(C_T)F$ is $P(T\mathbf{y}-\mathbf{y}')/P(\mathbf{y})$, a polynomial with rational coefficients and roots $y'_i/y_i$.

Consider another linear differential equation of third order
\begin{equation}\label{eqb}
b_3(z)\frac{\partial^3 }{\partial z^2} f(z)+b_2(z)\frac{\partial^2 }{\partial z^2} f(z)+b_1(z)\frac{\partial }{\partial z} f(z)+b_0(z)f(z)=0
\end{equation}
where $b_i\in\mathbb{C}[z]$. Equation \eqref{eqb} is a \emph{pullback} of \eqref{eq0} if there exist a function $s(z)$ such that $f(z)=y(s(z))$ is a solution of \eqref{eqb} whenever $y(z)$ is a solution of \eqref{eq0}.

From \cite{SANABRIA2022}, to describe $\eqref{eq0}$ as the pullback of a ${}_3F_2$ equation operator we need to compute the values of $P(\mathbf{y}+f\mathbf{y}')$ where $P(X_1,X_2,X_3)$ belong to a set of semi-invariant polynomial of degree $m$.

   \noindent\underline{\sf gaugeEquivalentValue}\\
   \textsf{Input:} An equation \eqref{eq0}, a basis of solutions $y_1,y_2,y_3$ in power series, a semi-invariant homogeneous polynomial $P(X_1,X_2,X_3)$ together with its order $r$, the hyperexponential part $\sqrt[r_0]{p(z)}$ of $P(y_1,y_2,y_3)$, and a function $f$.\\
   \textsf{Output:}  $Q(f)=P(y_1+fy_1',y_2+fy_2',y_3+fy_3')$.\\
   \begin{enumerate}[leftmargin=*]
      \item Let $C_o$ be the vector of coefficients of $P(X_1,X_2,X_3)$ in the monomials $\mathbf{X}^{\mathbf{m}_{i}}/\binom{m}{\mathbf{m}_i}$. Set $m:=\deg(P)$.
      \item Obtain $F = \hbox{Sym}^m(Y)C_o$ from \underline{\sf valuesOfSemiInvariant} with the semi-invariant $P$, degree $m$, and order $r$.
      \item Let
      $$C_f:=\left[ \begin{array}{rrr} 1 & f & 0\\ 0 & 1+f' & f\\ -a_0/a_3f & f''-a_1/a_3f & 1+2f'-a_2/a_3f\end{array}\right].$$ 
      \item \textsf{Return} the first entry of $\hbox{Sym}^m(C_f)F$.
   \end{enumerate}

Note that only the $m+1$st entries of the first row of $\hbox{Sym}^m(C_T)$ and $\hbox{Sym}^m(C_f)$ contain non-zero elements, so to obtain $R(T)$ and $P(y_1+fy_1',y_2+fy_2',y_3+fy_3')$ one only needs the first $m+1$ entries of $F=\hbox{Sym}^m(Y)C_o$ in the second step of the algorithms \underline{\sf produceRiccatiPolynomial} and \underline{\sf gaugeEquivalentValue}.

\section{Primitive and Imprimitive groups}

\subsection{The imprimitive case}

   From the list of the possible semi-invariant algebras $\mathcal{A}$, we see that $\hbox{Gal}(L)$ is imprimitive if and only if $L$ admits a semi-invariant of degree $3$ and order $2$. We identify the semi-invariant that factors into linear terms with the following proposition.

   \begin{prop}\label{prop_linear}
      Let $P_3=X_1X_2X_3$ and $S_3 = X_1^3+X_2^3+X_3^3$ then $P=aP_3+bS_3$ factors into linear factors if and only if $HB(P)=\lambda P$ for some $\lambda\in\mathbb{C}$.
   \end{prop}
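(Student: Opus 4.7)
The plan is to carry out an explicit Hessian computation on the Hesse pencil $\{aP_3+bS_3\}$ and then translate both sides of the equivalence into the same pair of conditions on $(a:b)$.

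First I would compute $H(P)$ for $P=aP_3+bS_3$. Since the only nonzero second partials of $P_3$ are the off-diagonal $\partial_i\partial_j P_3=X_k$ (with $\{i,j,k\}=\{1,2,3\}$), and the only nonzero ones of $S_3$ are $\partial_i^2 S_3=6X_i$, the Hessian matrix of $P$ is
$$\begin{pmatrix}6bX_1 & aX_3 & aX_2\\ aX_3 & 6bX_2 & aX_1\\ aX_2 & aX_1 & 6bX_3\end{pmatrix},$$
and a direct expansion gives $H(P)=(2a^3+216b^3)P_3-6a^2bS_3$. Setting $H(P)=\lambda P$ and equating coefficients of $P_3$ and $S_3$ yields the system $2a^3+216b^3=\lambda a$, $-6a^2b=\lambda b$, whose projective solution set consists of $b=0$ together with the three points satisfying $a^3+27b^3=0$.

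For the $(\Leftarrow)$ direction I would then check that each such $P$ factors. When $b=0$, $P=aX_1X_2X_3$ is trivially a product of linear forms. When $a=-3\zeta b$ with $\zeta^3=1$, the classical Hesse identity
$$X_1^3+X_2^3+X_3^3-3X_1X_2X_3=\prod_{k=0}^{2}(X_1+\omega^k X_2+\omega^{-k}X_3),$$
with $\omega=e^{2\pi i/3}$, together with a rescaling $X_i\mapsto\mu_i X_i$ absorbing $\zeta$, exhibits $P=b(S_3-3\zeta P_3)$ as a product of three linear forms.

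For the $(\Rightarrow)$ direction, suppose $P=L_1L_2L_3$. Then $\{P=0\}$ is a union of three lines in $\mathbb{P}^2$, hence singular at each pairwise intersection, so $\nabla P$ has a nontrivial zero. The gradient equations $3bX_i^2+aX_jX_k=0$ for $\{i,j,k\}=\{1,2,3\}$ multiply together to give $(a^3+27b^3)(X_1X_2X_3)^2=0$. Either $X_1X_2X_3\ne 0$ and $a^3+27b^3=0$, or some $X_i$ vanishes, in which case a short substitution back into the gradient system forces $b=0$. So $(a:b)$ is one of the four points above, and $H(P)=\lambda P$ by the first step. The main obstacle is really bookkeeping rather than anything conceptual: the $3\times3$ determinant, the reduction of the two-equation system in $(a,b,\lambda)$, and the case split on whether $X_1X_2X_3$ vanishes at the singular point. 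None of these steps is delicate, provided one is careful with the explicit computation.
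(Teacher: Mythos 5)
Your proof is correct, but in the forward direction it takes a genuinely different route from the paper's. The paper proves ``factors $\Rightarrow$ $H(P)=\lambda P$'' conceptually: after a linear change of variables $\mathbf{X}=\mathbf{Z}C$ with $\det C=1$ one may assume $P=Z_1Z_2Z_3$, whose Hessian is $2P$, and the covariance $D^2_{\mathbf{Z}}P=C^{T}D^2_{\mathbf{X}}P\,C$ transports this identity back; the converse is then dismissed ``by inspection.'' You instead compute $H(aP_3+bS_3)=(2a^3+216b^3)P_3-6a^2bS_3$ once and for all, identify the eigenlocus $\{b=0\}\cup\{a^3+27b^3=0\}$, verify factorization there via the Hesse identity (this makes the paper's ``inspection'' explicit), and get the forward direction from the singularity of a union of lines plus the gradient system (note your phrase ``multiply together'' should be read as multiplying the three relations $3bX_i^2=-aX_jX_k$, since the product of the three left-hand sides is not literally $(a^3+27b^3)(X_1X_2X_3)^2$). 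Your forward argument buys something the paper's does not: it never assumes the three linear factors are linearly independent, whereas the paper's normalization to $Z_1Z_2Z_3$ silently requires this and does not rule out degenerate factorizations such as $L_1^2L_2$; it also exhibits the four singular members of the pencil explicitly. The paper's argument, in exchange, is coordinate-free and shows in one line that any nondegenerate triangle of lines is an eigenform of the Hessian, independently of the pencil. One cosmetic remark applying to both: the statement writes $HB(P)$, but $HB$ is defined with two arguments; like the paper's own proof, you correctly work with the ordinary Hessian $H(P)$ throughout.
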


   \begin{proof}
      A polynomial $P$ of degree $3$ factors into three linearly independent linear factors if and only if, up to a linear change of variables, $P=Z_1Z_2Z_3$. Suppose that $\mathbf{X}=\mathbf{Z}C$ where $C$ is a $3\times 3$ matrix with constant coefficients. We may assume that $\det(C)=1$. If $D_{\mathbf{X}}$ denotes the total derivative with respect to $\mathbf{X}$ then, applying the chain rule twice and noticing that $D_{\mathbf{Z}}\mathbf{X}=C$, we obtain $D_{\mathbf{Z}}P=D_{\mathbf{X}} P\cdot D_{\mathbf{Z}}\mathbf{X}=D_{\mathbf{X}}P\cdot C$ and $D^2_{\mathbf{Z}}P=C^T\cdot D^2_{\mathbf{X}}P\cdot C$. So $H(P)=H(P)\det(C)^2=\det\left(D^2_{\mathbf{Z}}P\right)=2P$. Conversely, by inspection, for all $a,b$ such that $HB(P)=2P$ we have that $P$ factors into linear terms.
   \end{proof}

   \noindent\underline{\sf riccatiSolutionImprimitive}\\
   \textsf{Input:} An equation \eqref{eq0} with $a_2(z)=0$ and imprimitive differential Galois group.\\
   \textsf{Output:} A Riccati polynomial for \eqref{eq0}.\\
   \begin{enumerate}[leftmargin=*]
      \item Let $y_1,y_2,y_3$ be a basis of solutions of \eqref{eq0} in power series.
      \item Using \underline{\sf semiInvariants}, obtain the set $B:=\{\left(C_{o,1},\sqrt[r_1]{p_1(z)}\right)$, $\ldots$, $\left(C_{o,s},\sqrt[r_s]{p_s(z)}\right)\}$ of coefficients of the semi-invariants of degree $m=3$ and order $r|2$, together with their hyperexponential parts.
      \item If $\sharp B=1$, then let $P$ be the polynomial with coefficients $C_{o,1}$ in the monomials $\mathbf{X}^{\mathbf{m}_{i}}/\binom{m}{\mathbf{m}_i}$. Else $\sharp B=2$, then let $P_1$ and $P_2$ be the polynomial with coefficients $C_{o,1}$ and $C_{o,2}$ in the monomials $\mathbf{X}^{\mathbf{m}}/\binom{m}{\mathbf{m}}$, find $a_1,a_2\in\mathbb{C}$ such that $H(a_1P_1+a_2P_2)=2(a_1P_1+a_2P_2)$ and let $P:=a_1P_1+a_2P_2$.
      \item \textsf{Return} $R(T)$, computed from {\underline{\sf produceRiccatiPolynomial}} using the semi-invariant $P$.
   \end{enumerate}

\subsection{The primitive case}

   As discussed above, if $\hbox{Gal}(L)\subseteq SL_3(\mathbb{C})$ is irreducible and $L$ has Liouvillian solutions, then all the solutions are algebraic. From Theorem 7 and  corollary 8 in \cite{SANABRIA2022} follows that a fundamental matrix for the system $X'=AX$ is of the form $Y(z)=CY_0(s(z))$ where $Y_0$ is the Wronskian of a basis of solutions of a ${}_3F_2$ equation, and the entries of the $3\times 3$ matrix $C$ and $s(z)$ are in an algebraic extension of $\mathbb{Q}(z)$ of known bounded degree. The specific ${}_3F_2$ operator, and the formulas for $C$ and $s(z)$ depend on $\hbox{Gal}(L)$ and are exposed in \cite{SANABRIA2022}.

   For the following $5$ \underline{\sf pullbackSolution***} algorithms, where \underline{\sf ***} is \underline{\sf G168}, \underline{\sf A6}, \underline{\sf H216}, \underline{\sf H72} \underline{\sf F36} the output is the same:\\
   \textsf{Output:} A ${}_3F_2$ equation $E$, a function $s(z)\in\mathbb{Q}(z,f)$, the minimal polynomial $Q(f)\in\mathbb{Q}(z)$ of $f$, and a matrix $C\in M_3(K)$ where $K$ is a radical extension of $\mathbb{Q}(z,f)$, such that $Y(z)=CY_0(s(z))$ is a Wronskian of solutions of \eqref{eq0} whenever $Y_0(z)$ is a Wronskian of solutions of $E$.

   The \underline{\sf pullbackSolution***} algorithms below use the matrices $C_h$ and $C_f$ as defined in \underline{\sf valuesOfSemiInvariant} and \underline{\sf gaugeEquivalentValue}.

   \noindent\underline{\sf pullbackSolutionG168}\\
   \textsf{Input:} An equation \eqref{eq0} with differential Galois group isomorphic to $G_{168}$ or $G_{168}\times C_3$.
   \begin{enumerate}[leftmargin=*]
      \item Let $y_1,y_2,y_3$ be a basis of solutions of \eqref{eq0} in power series.
      \item Using \underline{\sf semiInvariants}, obtain the set $B_4:=\{\left(C_{o,1},\sqrt[r_1]{p_1(z)}\right)\}$ of coefficients of a semi-invariant of degree $4$, order $r_1|3$, and dimension $d=1$, together with its hyperexponential parts.
      \item Let $P_{o,4}$ be the polynomial with coefficients $C_{o,1}$ in the monomials $\mathbf{X}^{\mathbf{m}_{i}}/\binom{m}{\mathbf{m}_i}$. Let $a$ be a variable and set $P_{a,4}:=aP_{o,4}$, $P_{a,6}:=H(P_{a,4})/54$, $P_{a,14}:=HB(P_{a,4},P_{a,6})/9$ and $P_{a,21}=J(P_{a,4},P_{a,6},P_{a,14})/14$.
      \item Set $a$ such that 
      {\footnotesize
      \begin{align*}
         P_{a,21}^2 = & -2048  P_{a,4}^9  P_{a,6}+22016  P_{a,4}^6  P_{a,6}^3-256  P_{a,14}  P_{a,4}^7-60032  P_{a,4}^3  P_{a,6}^5\\
            & +1088  P_{a,14}  P_{a,4}^4  P_{a,6}^2 +1728  P_{a,6}^7+1008  P_{a,14}  P_{a,4}  P_{a,6}^4-88  P_{a,14}^2  P_{a,4}^2  P_{a,6}+ P_{a,14}^3.
      \end{align*}
      }
      \item Let $f$ be a generic function. For $i=4,6,14$, obtain $Q_i(f)$ from \underline{\sf gaugeEquivalentValue} with the semi-invariant $P_{a,i}$; degree $m=i$; order $r=r_1$,$1$, or $r_1$; and exponential part $\sqrt[r_1]{p_1(z)}$, $1$, or $\sqrt[r_1]{p_1(z)^2}$ respectively, depending on whether $i=4$, $6$, or $14$.
      \item Let $Q(f)$ be an irreducible factor in $\mathbb{Q}(z)[f]$ of the numerator of $Q_4(f)$. Set $s(z):= -[Q_{14}(f)]^3/(1728[Q_{6}(f)]^7)$, and $C:=C_f^{-1}C_h$ with $h=\sqrt[6]{Q_{6}(f)}$
      \item \textsf{Return} The ${}_3F_2(-1/42,5/42,17/42;1/3,2/3| z)$ equation, $s(z)$, $Q(f)$ and $C$.
   \end{enumerate}

   \noindent\underline{\sf pullbackSolutionA6}\\
   \textsf{Input:} An equation \eqref{eq0} with differential Galois group isomorphic to $A_6^{SL_3}$.
   \begin{enumerate}[leftmargin=*]
      \item Let $y_1,y_2,y_3$ be a basis of solutions of \eqref{eq0} in power series.
      \item Using \underline{\sf semiInvariants}, obtain the set $B_6:=\{\left(C_{o,1},1\right)\}$ of coefficients of an invariant of degree $6$.
      \item Let $P_{o,6}(X_1,X_2,X_3)$ be the polynomial with coefficients $C_{o,1}$ in the monomials $\mathbf{X}^{\mathbf{m}_{i}}/\binom{m}{\mathbf{m}_i}$. Let $a$ be a variable and set $P_{a,6}:=aP_{o,6}$, $P_{a,12}:=-H(P_{a,6})/20250$, $P_{a,30}:=HB(P_{a,6},P_{a,12})/23400$ and $P_{a,45}=J(P_{a,6},P_{a,12},P_{a,30})/4860$.
      \item Set $a$ such that
      {\footnotesize
      \begin{align*}
         19683  P_{a,45}^2 = &  4  P_{a,6}^{13}  P_{a,12} + 80  P_{a,6}^{11}  P_{a,12}^2 + 816  P_{a,6}^9  P_{a,12}^3+ 18  P_{a,6}^{10}  P_{a,30} + 4376  P_{a,6}^7  P_{a,12}^4 \\
          & + 198  P_{a,6}^8  P_{a,12}  P_{a,30} + 13084  P_{a,6}^5  P_{a,12}^5 + 954  P_{a,6}^6  P_{a,12}^2  P_{a,30} + 12312  P_{a,6}^3  P_{a,12}^6\\
          & - 198  P_{a,6}^4  P_{a,12}^3  P_{a,30} + 5616  P_{a,6}  P_{a,12}^7 - 162  P_{a,6}^5  P_{a,30}^2 - 5508  P_{a,6}^2  P_{a,12}^4  P_{a,30}\\
          & - 1944  P_{a,6}^3  P_{a,12}  P_{a,30}^2 - 1944  P_{a,12}^5  P_{a,30} - 1458  P_{a,6}  P_{a,12}^2  P_{a,30}^2 + 729  P_{a,30}^3.
      \end{align*}
      }
      \item Let $f$ be a generic function. For $i=6,12,30$, obtain $Q_i(f)$ from \underline{\sf gaugeEquivalentValue} with the invariant $P_{a,i}$, degree $m=i$, order $r=1$, and exponential part $1$.
      \item Let $Q(f)$ be an irreducible factor in $\mathbb{Q}(z)[f]$ of the numerator of $Q_6(f)$, let $s(z):= [3Q_{30}(f)]^2/(8[Q_{12}(f)]^5)$, and $C:=C_f^{-1}C_h$ with $h=\sqrt[12]{Q_{12}(f)}$
      \item \textsf{Return} The ${}_3F_2(-1/60,11/60,7/12;1/2,3/4| z)$ equation, $s(z)$, $Q(f)$ and $C$.
   \end{enumerate}

   \noindent\underline{\sf pullbackSolutionH216}\\
   \textsf{Input:} An equation \eqref{eq0} with differential Galois group isomorphic to $H_{216}^{SL_3}$.
   \begin{enumerate}[leftmargin=*]
      \item Let $y_1,y_2,y_3$ be a basis of solutions of \eqref{eq0} in power series.
      \item Using \underline{\sf semiInvariants}, obtain the set $B_6:=\{\left(C_{o,1},\sqrt[3]{p_1(z)}\right)\}$ of coefficients of a semi-invariant of degree $6$, order $3$, and dimension $d=1$, together with its hyperexponential parts; and $B_9:=\{\left(C_{o,2},1\right)\}$, the invariant of degree $9$.
      \item Let $P_{o,6}(X_1,X_2,X_3)$ and $P_{o,9}(X_1,X_2,X_3)$ be the polynomials with coefficients $C_{o,1}$ and $C_{o,2}$ in the monomials $\mathbf{X}^{\mathbf{m}_{i}}/\binom{m}{\mathbf{m}_i}$. Let $a$ and $b$ be variables and set $P_{a,6}:=aP_{o,6}$, $P_{b,9}:=bP_{o,9}$, $P_{a,12}:=-H(P_{a,6})/108000$, $P_{a,b,12}:=HB(P_{a,6},P_{b,9})/(675P_{a,12})$.
      \item Set $a$ and $b$ such that $864P_{b,9}P_{a,12}=-H(P_{b,9})$ and 
      {\footnotesize
      \begin{align*}
         6912  P_{a,12}^3 = & 1728 P_{a,6}^3 P_{b,9}^2-3 P_{a,6}^2 P_{a,b,12}^2+2592 P_{a,6} P_{a,b,12} P_{b,9}^2+186624 P_{b,9}^4-4P_{a,b,12}.
      \end{align*}
      }
      \item Let $f$ be a generic function. For $i=6,9,12$, obtain $Q_i(f)$ from \underline{\sf gaugeEquivalentValue} with the semi-invariant $P_{a,6}$, $P_{b,9}$, or $P_{a,b,12}$; degree $m=i$; order $r=3$,$1$, or $3$ and exponential part $\sqrt[r_1]{p_1(z)}$, $1$, or $\sqrt[r_1]{p_1(z)^2}$ respectively, depending on whether $i=6$, $9$, or $12$.
      \item Let $Q(f)$ be an irreducible factor in $\mathbb{Q}(z)[f]$ of the numerator of $Q_6(f)$, let $s(z):= [6^6Q_{9}(f)]^4/([Q_{12}(f)]^3)$, and $C:=C_f^{-1}C_h$ with $h=\sqrt[9]{Q_{9}(f)}$.
      \item \textsf{Return} The ${}_3F_2(17/36,2/9,-1/36;1/3,2/3| 1/z)$ equation, $s(z)$, $Q(f)$ and $C$.
   \end{enumerate}

   \noindent\underline{\sf pullbackSolutionH72}\\
   \textsf{Input:} An equation \eqref{eq0} with differential Galois group isomorphic to $H_{72}^{SL_3}$.
   \begin{enumerate}[leftmargin=*]
      \item Let $y_1,y_2,y_3$ be a basis of solutions of \eqref{eq0} in power series.
      \item Using \underline{\sf semiInvariants}, obtain the set $B_6:=\{\left(C_{o,1},\sqrt[2]{p_1(z)}\right)\}$ of coefficients of a semi-invariant of degree $6$, order $2$, and dimension $d=1$, together with its hyperexponential parts.
      \item Let $P_{o,6}(X_1,X_2,X_3)$ be the polynomial with coefficients $C_{o,1}$ in the monomials $\mathbf{X}^{\mathbf{m}_{i}}/\binom{m}{\mathbf{m}_i}$. Let $P_{1,3}(X_1,X_2,X_3)$ and $P_{2,3}(X_1,X_2,X_3)$ be factors of $P_{o,6}(X_1,X_2,X_3)$ in $\mathbb{Q}[\sqrt{3}](X_1,X_2,X_3)$. Let $a$ and $b$ be variables and set $P_{a3}:=aP_{1,3}$, $P_{b,3}:=bP_{2,3}$, $P_{a,b,6}=\left(HB(P_{a,3},P_{b,3})+HB(P_{b,3},P_{a,3})\right)$, $P_{a,b,6,*}:=P_{a,3}P_{b,3}$, $P_{a,b,9}:=\sqrt{3}J(P_{a,3},P_{b,3},P_{a,b,6})/1944$ and $P_{a,b,12}:=-H(P_{a,b,6})/108000$.
      \item Set $a$ and $b$ such that $H(P_{a,3})=-108(2+\sqrt{3})P_{b,3}$ and $H(P_{b,3})=-108(2-\sqrt{3})P_{a,3}$. If $P_{a,b,6}$, $P_{a,b,6,*}$, $P_{a,b,9}$, and $P_{a,b,12}$ doesn't verify
      {\footnotesize
      \begin{align*}
      0 = & (P_{a,b,6}^3 - 3 P_{a,b,6} P_{a,b,6,*}^2 - 2 P_{a,b,6,*}^3 - 432 P_{a,b,9}^2)\\
       & (P_{a,b,6}^3 - 3 P_{a,b,6} P_{a,b,6,*}^2 + 2 P_{a,b,6,*}^3 - 432 P_{a,b,9}^2) + 72 P_{a,b,12} (P_{a,b,6}^4 \\
       &  - 3 P_{a,b,6}^2 P_{a,b,6,*}^2 + 2 P_{a,b,6,*}^4 + 18 P_{a,b,6}^2 P_{a,b,12} - 432 P_{a,b,6} P_{a,b,9}^2 - 24 P_{a,b,6,*}^2 P_{a,b,12})
      \end{align*}
      }
      then set $a$ and $b$ such that $H(P_{a,3})=-108(2-\sqrt{3})P_{b,3}$ and $H(P_{b,3})=-108(2+\sqrt{3})P_{a,3}$
      \item Let $f$ be a generic function. For $i=6,9,12$, obtain $Q_i(f)$ from \underline{\sf gaugeEquivalentValue} with the invariants $P_{a,b,6}$, $P_{b,b,9}$, $P_{a,b,12}$. Obtain $Q_{6,*}$ with the semi-invariants $P_{a,b,6,*}$ of degree $6$, order $2$ and hyperexponential part $\sqrt[r_1]{p_1(z)}$.
      \item Let $Q(f)$ be an irreducible factor in $\mathbb{Q}[\sqrt{3}](z)[f]$ of the numerator of $Q_6(f)$, let $s(z):= [6^6Q_{9}(f)]^4/([Q_{6*}^2-12Q_{12}(f)]^3)$, and $C:=C_f^{-1}C_h$ with $h=\sqrt[9]{Q_{9}(f)}$
      \item \textsf{Return} The ${}_3F_2(17/36,2/9,-1/36;1/3,2/3| 1/z)$ equation, $s(z)$, $Q(f)$ and $C$.
   \end{enumerate}

   \noindent\underline{\sf pullbackSolutionF36}\\
   \textsf{Input:} An equation \eqref{eq0} with differential Galois group isomorphic to $F_{36}^{SL_3}$.
   \begin{enumerate}[leftmargin=*]
      \item Let $y_1,y_2,y_3$ be a basis of solutions of \eqref{eq0} in power series.
      \item Using \underline{\sf semiInvariants}, obtain the set $B_3:=\{\left(C_{o,1},\sqrt[4]{p_1(z)}\right)$, $\left(C_{o,2},\sqrt[4]{p_2(z)}\right)\}$ of coefficients of two semi-invariants of degree $3$, order $4$, and dimension $d=2$, together with their hyperexponential parts.
      \item Let $P_{1,3}$ and $P_{2,3}$ be the polynomials with coefficients $C_{o,1}$ and $C_{o,2}$ in the monomials $\mathbf{X}^{\mathbf{m}_{i}}/\binom{m}{\mathbf{m}_i}$. Let $a$ and $b$ be variables and set $P_{a,3}:=aP_{1,3}$, $P_{b,3}:=bP_{2,3}$, $P_{a,b,6}=\left(HB(P_{a,3},P_{b,3})+HB(P_{b,3},P_{a,3})\right)$ and $P_{a,b,9}:=\sqrt{3}J(P_{a,3},P_{b,3},P_{a,b,6})/1944$.
      \item Set $a$ and $b$ such that $H(P_{a,3})=-108(2+\sqrt{3})P_{b,3}$ and $H(P_{b,3})=-108(2-\sqrt{3})P_{a,3}$. If $P_{a,3}$, $P_{b,3}$, $P_{a,b,6}$, and $P_{a,b,9}$ doesn't verify
      {\footnotesize
      \begin{align*}
         0 = & 2(-7 + 4 \sqrt{3}) P_{a,3} P_{b,3}^5 + \sqrt{3} P_{a,3}^4 P_{a,b,6} - 6 \sqrt{3} P_{a,3}^2 P_{a,b,6} P_{b,3}^2\\
          &  + (12 - 7 \sqrt{3}P_{b,3}^4 P_{a,b,6})- 2 P_{a,3}^5 P_{b,3} + 12 P_{a,3}^2 P_{b,3}^2 P_{a,b,6}\\
          &  + (-8 + 4 \sqrt{3}) P_{a,b,6}^3 + 1728(2 - \sqrt{3}) P_{a,b,9}^2
      \end{align*}
      }
      then set $a$ and $b$ such that $H(P_{a,3})=-108(2-\sqrt{3})P_{b,3}$ and $H(P_{b,3})=-108(2+\sqrt{3})P_{a,3}$.
      \item Let $f$ be a generic function. For $i=3,6,9$, obtain $Q_i(f)$ from \underline{\sf gaugeEquivalentValue} with the semi-invariant $P_{b,3}$, $P_{a,6}$ or $P_{a,b,9}$; degree $m=i$; order $r=4$,$1$, or $1$; and exponential part $\sqrt[r_1]{p_1(z)}$, $1$, or $1$ respectively, depending on whether $i=3$, $6$, or $9$.
      \item Let $Q(f)$ be an irreducible factor in $\mathbb{Q}[\sqrt{3}](z)[f]$ of the numerator of $Q_{3}(f)$, let $s(z):= [Q_{6}(f)]^3/([Q_{6}(f)]^3-432[Q_{9}(f)]^2)$, and $C:=C_f^{-1}C_h$ with $h=\sqrt[6]{Q_{6}(f)}$
      \item \textsf{Return} The ${}_3F_2(-1/12,1/6,5/12;1/4,3/4| 1/z)$ equation, $s(z)$, $Q(f)$ and $C$.
   \end{enumerate}

   \noindent\underline{\sf pullbackSolutionA5}\\
   \textsf{Input:} An equation \eqref{eq0} with differential Galois group isomorphic to $A_5$ or $A_5\times C_3$.\\
   \textsf{Output:} A second order linear ordinary differential equation $E$ with coefficients in a quadratic extension of $\mathbb{Q}(z)$ and differential Galois group $A_5^{SL_2}$, a matrix $C\in M_3(\mathbb{Q}(z,f))$, and the minimal polynomial $Q(f)\in\mathbb{Q}(z)$ of $f$, such that $Y=CY_0$ is a Wronskian of solutions of \eqref{eq0} whenever $Y_0$ is the Wronskian of $y_1^2,y_1y_2,y_2^2$ and $y_1,y_2$ are solutions of $E$.\\
   \begin{enumerate}[leftmargin=*]
      \item Let $y_1,y_2,y_3$ be a basis of solutions of \eqref{eq0} in power series.
      \item Using \underline{\sf semiInvariants}, obtain the set $B_2:=\{\left(C_{o,1},\sqrt[r_1]{p_1(z)}\right)\}$ of coefficients of a semi-invariant of degree $2$, order $r_1|3$, and dimension $d=1$, together with its hyperexponential parts.
      \item Let $P_{o,2}$ be the polynomial with coefficients $C_{o,1}$ in the monomials $\mathbf{X}^{\mathbf{m}_{i}}/\binom{m}{\mathbf{m}_i}$.
      \item Let $f$ be a generic function. Obtain $Q_2(f)$ from \underline{\sf gaugeEquivalentValue} with the semi-invariant $P_{0,2}$, degree $m=2$, order $r=r_1$, and exponential part $\sqrt[r_1]{p_1(z)}$.
      \item Let $Q(f)$ be an irreducible factor in $\mathbb{Q}(z)[f]$ of the numerator of $Q_2(f)$.
      \item Let $L_f=\partial z^3-b_2\partial z^2-b_1\partial z-b_0$ where $[b_0,b_1,b_2]$ are the entries of the third row of $A_{C_f}=C_f'C_f^{-1}+C_fAC_f^{-1}$. Let $L_0$ be such that $L_f=Sym^2(L_0),$
      $$L_0=\partial z^2-b_2/3\partial z-b_2^2/18+(1/12)b_2'-b_1/4.$$
      \item \textsf{Return} $L_0y=0$, $C=C_f^{-1}$, $Q(f)$.
   \end{enumerate}

Note that in \underline{\sf pullbackSolutionA5}, the solutions of $L_f$ satisfy a homogeneous quadratic equation with constant coefficients and therefore the operator is a symmetric square \cite{SINGER1988}.

   \subsection{The algorithm}

   \noindent\underline{\sf kovacicOrder3}\\
   \textsf{Input:} A third order differential irreducible operator $L=Dz^3+a_2Dz^2+a_1Dz+a_0$ with coefficients in $\mathbb{Q}(z)$.\\
   \textsf{Output:} A basis of Liouvillian solutions of $L$.\\
   \begin{enumerate}[leftmargin=*]
      \item Factorize $L$. If $L=L_1L_2$ then let $S$ be basis of Liouvillian solutions of $L$, obtained using reduction of order and Kovacic algorithm for second order differential operators.
      \item Else $L$ is irreducible. 
         \begin{enumerate}[leftmargin=5pt] 
            \item Let $L_0 := \hbox{Sym}(L,Dz+a_{2}/3)$. If $L_0$ has an irregular singularity, set\\ $S_0:=\emptyset$.
            \item Else $L$ is Fuchsian. Denote $G=\hbox{Gal}(L_0)$ 
               \begin{enumerate}[leftmargin=5pt] 
                  \item Let $B_3:=\text{\underline{\sf semiInvariants}}(L_0,m=3,r=2,d=2)$. If $B_3$ is non-empty then $G$ is imprimitive, set\\ $S_0:=\text{\underline{\sf riccatiSolutionImprimitive}}(L_0)$.
                  \item Else $B_3$ is empty and $G$ is primitive.
                     \begin{itemize}[leftmargin=5pt]
                        \item Let $B_2:=\text{\underline{\sf semiInvariants}}(L_0,m=2,r=3,d=1)$. If $B_2$ is non-empty, then:
                        \begin{itemize}[leftmargin=5pt]
                           \item Let $B_6:=\text{\underline{\sf semiInvariants}}(L_0,m=6,r=1,d=1)$. If $B_6$ has only one element then $G\simeq SO_3(\mathbb{C})$, set\\ $S_0:=\emptyset$.
                           \item Else $B_6$ has two elements, $G$ is isomorphic to $A_5$ or $A_5\times C_3$, set \\ $S_0:=\text{\underline{\sf pullbackSolutionA5}}(L_0)$. 
                        \end{itemize}
                        \item Else $B_2$ is empty. 
                        \begin{itemize}[leftmargin=5pt]
                           \item Let $B_3:=\text{\underline{\sf semiInvariants}}(L_0,m=3,r=4,d=2)$. If $B_3$ is not empty then $G\simeq F_{36}^{SL_3}$, set\\ $S_0:=\text{\underline{\sf pullbackSolutionF36}}(L_0)$.
                           \item Else $B_3$ is empty.
                              \begin{itemize}[leftmargin=5pt]
                                 \item Let $B_4:=\text{\underline{\sf semiInvariants}}(L_0,m=4,r=3,d=1)$. If $B_4$ is not empty then $G$ is isomorphic to $G_{168}$ or $G_{168}\times C_3$, set\\ $S_0:=\text{\underline{\sf pullbackSolutionG168}}(L_0)$.
                                 \item Else $B_4$ is empty.
                                    \begin{itemize}[leftmargin=5pt]
                                       \item Let $B_6:=\text{\underline{\sf semiInvariants}}(L_0,m=6,r=2,d=1)$. If $B_6$ has more than 1 element then $G\simeq H_{72}^{SL_3}$, set\\ $S_0:=\text{\underline{\sf pullback\_solution\_H72}}(L_0)$.
                                       \item Else, if $B_6$ has exactly one element then $G\simeq A_{6}^{SL_3}$, let $S_0:=$\underline{\sf pullbackSolutionA6}$(L_0)$.
                                       \item Else $B_6$ is empty.
                                          \begin{itemize}[leftmargin=5pt]
                                             \item Let $B_6:=\text{\underline{\sf semiInvariants}}(L_0,m=6,r=3,d=1)$. If $B_6$ is not empty then $G\simeq H_{216}^{SL_3}$, set\\ $S_0:=\text{\underline{\sf pullbackSolutionH216}}(L_0)$.
                                             \item Else $G\simeq SL_3$, let $S_0=\emptyset$.
                                          \end{itemize}  
                                    \end{itemize}
                              \end{itemize}
                        \end{itemize}
                     \end{itemize}  
               \end{enumerate}
            \item From $S_0$ construct a basis of solutions $y_1,y_2,y_3$ and set $S:=\{e^{\frac{1}{3}\int a_2dz}y_i|\ i=1,2,3\}$.
         \end{enumerate}
      \item \textsf{Return} $S$.
   \end{enumerate}

\section{Examples}

The complexity of computing rational and hyperexponential solutions, or invariants and semi-invariants, of a linear ordinary differential equations depends on the degree, the order and on the local exponents at the singularities \cite{BARKATOU1999,VANHOEIJ1997}, and therefore the same applies to any algorithm that relies on their computation, like Kovacic's algorithm or ours. Because of this, we do not aim at obtaining low complexity in terms of the degree, order and height of the coefficients. The objective is rather to obtain a workable algorithm on reasonable examples rather than on cases of worst possible complexity. The most expensive part is obtaining all the entries of the semi-invariants and their hyperexponential values and so our algorithm was designed to optimize this part.

We present four examples illustrating the reach of our algorithm. The computations were done on a Macbook Pro 2018 2.6 Ghz with 16GB of RAM in Maple 2016.

\textbf{An imprimitive case.} Consider the operator
{\small
$$
\partial_z^3+\dfrac{1}{2}\dfrac{7z-4}{z(z-1)}\partial_z^2+\dfrac{1}{27}\dfrac{41z-6}{z^2(z-1)}\partial_z+\dfrac{2}{729}\dfrac{1}{z^2(z-1).}
$$
}
In 0.430s, the algorithm produces the solutions $e^{\int \omega(z)dz}/(z^{2/3}(z-1)^{1/2})$ where
{\small
\begin{align*}
   0 & =\omega(x)^3-\frac{1}{2}\frac{7z-4}{z(z-1)}\omega(x)^2+\frac{1}{108}\frac{440z^2-503z+144}{z^2(z-1)^2}\omega(x)\\
   & -\frac{1}{5832}\frac{9196z^3-15773z^2+9034z-1728}{z^3(z-1)^3}
\end{align*}
}

\textbf{A $G^{SL_3}_{168}$ test case.} In \cite{SANABRIA2022}, it is reported that to obtain a solution to the operator
{\small
$$
\partial_z^3+\frac{1}{2}\frac{7z-4}{z(z-1)}\partial_z^2+\frac{1}{252}\frac{387z-56}{z^2(z-1)}\partial_z-\frac{1}{74088}\frac{85}{z^2(z-1)}
$$}
in terms of ${}_3F_2(-1/42,5/42,17/42;1/3,2/3| z)$, gauge transformations and pullbacks, it required more than 145000s of computations in a similar computer to ours. Our algorithm produces the solutions in 59.611s.

\textbf{A $G^{SL_3}_{168}$ case.} The operator
{\small
\begin{align*}
   \partial_z^3 & + \frac{31z^7-17}{z(z^7-1)}\partial_z^2+\frac{1}{4}\frac{1152z^{14}-1513z^7+312}{z^2(z^7-1)^2}\partial_z\\
    & +\frac{1}{8}\frac{6336z^{21}-13735z^{14}+8805z^7-720}{z^3(z^7-1)^3}   
\end{align*}
}
has $9$ non-apparent singularities. Without Proposition \ref{prop_d}, it would be necessary to consider the splitting field of $a_3$ for each possible Galois group. This is significantly longer as there are $128$ candidates for the hyperexponential part, instead of $4$ when we can use the case $d=1$, which allows to avoid these field extensions. Our algorithm yields a solution in 37.517s.

\textbf{An $A_6$ case.} The operator
{\small
\begin{align*}
   \partial_z^3 & +\frac{1}{1200}\frac{1088z^2-1313z+1125}{z^2(z-1)^2}\partial_z\\
    & -\frac{1}{21600}\frac{19456z^3-35195z^2+52189z-20250}{z^3(z-1)^3}   
\end{align*}
}
has Galois group $A_6$. Computing its solutions with hypergeometric functions, gauge equivalences and pullbacks requires computing an invariant of degree $30$. Computing its solution with a Riccati polynomial requires obtaining the invariant of degree $45$. Our algorithm computes the solution in 1230.451s.

\section{Conclusion}
 We produced an algorithm for computing Liouvillian solutions of linear ordinary differential equations with rational coefficients whose execution time is manageable on reasonable examples. There are three key elements in the development of the algorithm. The first one is that the semi-invariants of all the different possible solvable groups in $SL_3(\mathbb{C})$ that arise as Galois groups of third order differential operators are related via the hessian, the bordered hessian and the jacobian. From this, it's possible to obtain all the semi-invariants computing only semi-invariants of low order. This represent an important improvement since the computation of the semi-invariants of higher order (for example, the invariant of order $45$ for $A_6$ or the one of order $21$ for $G_{168}$) are out of reach in a reasonable time with previous methods. The second key element is that we simplify the computation of the semi-invariant by reducing the number of candidates for the exponential part. In all generality, the number of candidates for the exponential part is big. The size of this set is a well know drawback in Beke's algorithm \cite{VANHOEIJ1997}. We managed to overcome this difficulty as in most cases, no extension is necessary, and that at worst, a quadratic extension will be required. This happens in semi-invariants of degree $3$ and order $2$ or $4$; in the rest of the cases, we can obtain the semi-invariants by staying in the field of rational functions. The third key element is computing the values of the semi-invariants in gauge equivalent equations and the Riccati polynomials using symmetric powers, together with the test in Propostion \ref{prop_linear} to identify semi-invariants of order $3$ that factors into linear terms. These three improvements renders the approach suggested in \cite{VANHOEIJ1999} into an efficient algorithm.

%
\bibliographystyle{ACM-Reference-Format}
\bibliography{K3}

%

\end{document}